\theoremstyle{plain}
\newtheorem{theorem}{Theorem}[section]
\newtheorem{lemma}[theorem]{Lemma}
\theoremstyle{definition}
\newtheorem{remark}[theorem]{Remark}
\newtheorem*{remark*}{Remark}
\numberwithin{equation}{section}
\newcommand\y[1]{#1}
\begin{document}

\title{Oscillations for order statistics of some discrete processes} 

\author[A.~Ottolini]{Andrea Ottolini}
\address[A.~Ottolini]{Department of Mathematics, Stanford University \\ 450 Jane Stanford Way, Stanford CA 94305 (USA).}
\email{ottolini@stanford.edu}

\begin{abstract}
Suppose $k$ balls are dropped into $n$ boxes independently with uniform probability, where $n, k$ are large with ratio approximately equal to some positive real $\lambda$. The maximum box count has a counterintuitive behavior: first of all, with high probability it takes at most two values $m_n$ or $m_n+1$, where $m_n$ is roughly $\frac{\ln n}{\ln \ln n}$. Moreover, it oscillates between these two values with an unusual periodicity. In order to prove this statement and various generalizations, it is first shown that for $X_1,...,X_n$ independent and identically distributed discrete random variables with common distribution $F$, under mild conditions, the limiting distribution of their maximum oscillates in three possible families, depending on the tail of the distribution. The result stated at the beginning follows from the equivalence of ensemble for the order statistics in various allocations problems, obtained via conditioning limit theory. Results about the number of ties for the maximum, as well as applications, are also provided. 
\end{abstract}

\keywords{Extreme value theory, random allocations, conditioning limit theory} 

\maketitle

\section{Introduction}
\subsection{Extreme value theory}
Even though outliers are often disregarded in statistical models, an understanding of rare and extreme events plays a central role in a variety of situations. An important example, which is analyzed in more detail later, is the occurrence of coincidences for big earthquakes. \\ \\
It is well known (see \cite{Gnedenko}) that for independent identically distributed (i.i.d.) random variables $X_1,...,X_n$ with common distribution function $F(x)$, in order for a law of large numbers for $X_{(n)}=\max_{1\leq i\leq n} X_i$ to hold, it is necessary and sufficient for the $X_i$'s to have a slowly varying tail. More precisely, 
\begin{equation}\label{gne}
\exists \, \,m_n \,\text{s.t.} \, \,X_{(n)}-m_n\, \,\rightarrow 0 \,\,\text{in probability} \Leftrightarrow \lim_{x\rightarrow +\infty }\frac{1-F(x+y)}{1-F(x)}= 0, \forall y>0.
\end{equation}
In case the condition above holds, necessary and sufficient conditions for the existence of a limiting distribution for $X_{(n)}$, after rescaling, are also standard in the literature, and the limits have been widely studied. For a survey on the subject, as well as generalizations and applications, the reader is referred to \cite{Galambos}, \cite{dH}.\\ \\
It is worth mentioning that \eqref{gne} fails to capture the maxima of a variety of distributions: in particular, if the $X_i$'s only take integer values, the above condition cannot be satisfied, since for $0<y<1$ and $x\in \mathbb N$ the above limit is one. The goal of this paper is to investigate the limiting behaviour of $X_{(n)}$ when condition \eqref{gne} is not satisfied. \\ \\
In this paper, `clustering' refers to the extent to which $X_{(n)}$ fails to satisfy a law of large numbers. Roughly speaking, it will be shown that the decay of the mass function determines the size of the cluster. As an instance, for Poisson random variables (whose mass function decays faster than geometrically) $X_{(n)}$ clusters at two values with high probability, while for negative binomial random variables (whose mass function has a geometric decay) $X_{(n)}$ is spread onto all integers with high probability.\\ \\
The first rigorous result in this direction is due to Anderson \cite{Anderson}. He classifies the cluster size for maxima of discrete random variables in terms of their tails. A further analysis is carried out in \cite{Sethuraman}, where lower order statistics are taken into account, as well as in \cite{Athreya}, where the authors study the number of ties.
 \\ \\ The first result of this paper completes the discussion given in Anderson \cite{Anderson}.
\begin{theorem}\label{generalversion}
Let $X_1,...,X_n$ be i.i.d. discrete random variables with common distribution $F$. Suppose the support of $F$ is not bounded from above, and that for some $\gamma\in[0,1]$
\begin{equation}\label{onlyass}
\lim_{n\rightarrow +\infty}\frac{1-F(n+1)}{1-F(n)}=\gamma.
\end{equation}
Then there exist two sequences $\{m_n\}\subset \mathbb N, \{p_n\}\subset [0,1]$ such that
\begin{itemize}
\item $\gamma=0 \Longrightarrow \mathbb P(X_{(n)}=m_n)\sim p_n, \,\mathbb P(X_{(n)}= m_n+1)\sim 1-p_n$,
\item $\gamma\in (0,1) \Longrightarrow \text{ for all $x\in\mathbb Z$, }\mathbb P(X_{(n)}\leq m_n+x)\sim p_n^{\gamma^x}$,
\item $\gamma=1 \Longrightarrow \text{ for all $x\in\mathbb Z$, }\mathbb P(X_{(n)}\leq m_n+x)\sim p_n.$
\end{itemize} 

In the first case, there exists an increasing sequence $\{N_i\}_{i\in \mathbb N}\subset \mathbb N$, with $N_i-N_{i-1}\rightarrow \infty$, such that for $n\rightarrow \infty, n\not\in \{N_i\}$, one has $p_{n+1}\leq p_n$ and $p_{n+1}-p_n \rightarrow 0$. 
\end{theorem}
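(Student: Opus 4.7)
The plan is to prove the statement by making an explicit choice of $m_n$ and $p_n$ and analyzing the jump structure of $m_n$. The canonical choice, which is the natural one for the proof of the $\gamma=0$ bullet, is $m_n := \max\{k \in \mathbb N : 1-F(k) \geq 1/n\}$ (well-defined for large $n$ by the unboundedness of the support) and $p_n := F(m_n)^n$. My first step is to check that this pair realizes $\mathbb P(X_{(n)}=m_n)\sim p_n$ and $\mathbb P(X_{(n)}=m_n+1)\sim 1-p_n$. Using $\mathbb P(X_{(n)}\leq k)=F(k)^n$, the defining inequality gives $n(1-F(m_n+1))<1$, and $(1-F(m_n+1))/(1-F(m_n))\to 0$ by hypothesis, so in fact $n(1-F(m_n+1))\to 0$, hence $F(m_n+1)^n\to 1$; combined with $F(m_n-1)^n\to 0$, this places all the mass of $X_{(n)}$ asymptotically on $\{m_n,m_n+1\}$.

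Next, I would let $\{N_i\}$ be the strictly increasing sequence of indices at which $m_n$ jumps, namely those $n$ with $m_{n+1}=m_n+1$. On the plateau where $m_n=m$, the admissible range of $n$ is $[\lceil 1/(1-F(m))\rceil,\lceil 1/(1-F(m+1))\rceil-1]$, so the plateau length satisfies
\[
N_{i+1}-N_i \;\geq\; \frac{1}{1-F(m)}\Bigl(\frac{1-F(m)}{1-F(m+1)}-1\Bigr)-1.
\]
Since $m\to\infty$ along the plateaus and $\gamma=0$ forces $(1-F(m))/(1-F(m+1))\to+\infty$, the right-hand side tends to $+\infty$, giving $N_i-N_{i-1}\to\infty$.

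Finally, for $n\notin\{N_i\}$ we have $m_{n+1}=m_n$, whence $p_{n+1}=F(m_n)^{n+1}=F(m_n)\cdot p_n\leq p_n$, and
\[
0 \;\leq\; p_n-p_{n+1} \;=\; p_n\bigl(1-F(m_n)\bigr) \;\leq\; 1-F(m_n) \;\to\; 0
\]
as $n\to\infty$, because $m_n\to\infty$ by unboundedness of the support. The only delicate point in this plan is the first step: verifying that the explicit choice $(m_n,p_n)$ above is the one produced by (and compatible with) the first bullet of the theorem, so that the additional claim genuinely refers to these sequences. Once that identification is in place, the remainder is an elementary computation driven entirely by the hypothesis $\gamma=0$, which enters through the single fact that the tail drops faster than any geometric rate between consecutive integers, thereby stretching the plateaus and damping the within-plateau differences $p_n-p_{n+1}$.
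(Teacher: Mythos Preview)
There is a genuine gap in your argument for the first bullet, and it stems from your choice of $m_n$. You take $m_n=\max\{k:\mathcal F(k)\ge 1/n\}$, which is $\lfloor x_n\rfloor$ in the paper's notation, and then claim that ``$n(1-F(m_n+1))<1$ together with $(1-F(m_n+1))/(1-F(m_n))\to 0$ gives $n(1-F(m_n+1))\to 0$.'' This inference is invalid: you only know $n\mathcal F(m_n)\ge 1$, with no upper bound, so the product $n\mathcal F(m_n)\cdot\frac{\mathcal F(m_n+1)}{\mathcal F(m_n)}$ need not tend to $0$. Concretely, take $\mathcal F(m)=e^{-m^2}$. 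Along $n$ with $\sqrt{\ln n}=m+1-\delta$ and $\delta=c/(m+1)$, your $m_n=m$ and
\[
n\mathcal F(m_n+1)=e^{(m+1-\delta)^2-(m+1)^2}=e^{-2(m+1)\delta+\delta^2}\longrightarrow e^{-2c}>0,
\]
so $F(m_n+1)^n\to e^{-e^{-2c}}<1$ and a positive fraction of the mass of $X_{(n)}$ sits at $m_n+2$. Thus $\mathbb P(X_{(n)}=m_n+1)\sim 1-p_n$ fails for your sequences. Note also that your $p_n=F(m_n)^n\le e^{-1}$ for all $n$, so $p_n$ can never approach $1$, contradicting the oscillation picture described in the remark after the theorem.

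The paper avoids this by taking $m_n=\lfloor x_n+\tfrac12\rfloor$, the \emph{nearest} integer to $x_n$ (where $\mathcal G(x_n)=1/n$ for a log-convex continuous extension $\mathcal G$ of $\mathcal F$). This guarantees $m_n+1\ge x_n+\tfrac12$, hence $n\mathcal F(m_n+1)\le \mathcal G(x_n+\tfrac12)/\mathcal G(x_n)\to\sqrt\gamma=0$, and symmetrically $m_n-1\le x_n-\tfrac12$ gives $n\mathcal F(m_n-1)\to\infty$. That half-integer buffer on both sides is exactly what your floor-based choice lacks. Your treatment of the plateau lengths $N_{i+1}-N_i\to\infty$ and the within-plateau monotonicity $p_{n+1}\le p_n$, $p_n-p_{n+1}\to 0$ is correct in spirit and transfers to the paper's sequences with only cosmetic changes. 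Finally, you do not address the cases $\gamma\in(0,1)$ and $\gamma=1$ at all; the paper handles both simultaneously via $\mathcal F(m_n+x)/\mathcal G(x_n)\sim \gamma^x\,\mathcal G(m_n)/\mathcal G(x_n)$, which immediately yields $\mathbb P(X_{(n)}\le m_n+x)\sim p_n^{\gamma^x}$.
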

\begin{remark}

The last part of Theorem \ref{generalversion} is from where the expression \y{``}oscillations" originates. Indeed, a more informal way of interpreting the result is the following: if the endpoints of $[0,1]$ are identified to obtain a circle (and the orientation on $[0,1]$ induces a counterclockwise orientation on the circle), then $p_n=e^{-2\pi i q_n}$, where the sequence $\{q_n\}$ is increasing and satisfies $q_n-q_{n-1}\rightarrow 0$, $\limsup q_n=+\infty$. In words, under this identification the sequence of $p_n$'s will move clockwise on the circle with smaller and smaller steps, winding around the origin infinitely many times. \\ From a probabilistic point of view, in the case $\gamma=0$ the sequences $\{p_n\}, \{1-p_n\}$ represent, for $n$ large, the relative frequency of $X_{(n)}=m_n, X_{(n)}=m_{n+1}$ respectively. Therefore, a histogram of many samples from  $X_{(n)}-m_n$ will not converge to a given shape: it will instead consist of two adjacent columns whose heights oscillate between $0$ and $1$. Moreover, for every fixed $p\in [0,1]$, it is possible to find a subsequence along which the height of the left column will converge to $p$ (and correspondingly, the height of the right one will converge to $1-p$). This again justifies the term \y{``}oscillations."
\end{remark}
\centerline{}
Another natural question concerns the number of times the maximum is expected to occur in a sequence of independent and identically distributed samples from a discrete distribution. This question in only addressed here in the case $\gamma=0$, where the result is the following.
\begin{theorem}\label{numties}
Let $X_1,...,X_n$ be i.i.d. with common distribution $F$ such that
\begin{align*}
\lim_{n\rightarrow +\infty}\frac{1-F(n+1)}{1-F(n)}=0.
\end{align*}
Then, for $p_n, m_n$ as in Theorem \ref{generalversion},
\begin{equation}\label{tiesmaxima}
\mathbb P(\text{at least $k$ ties for the maximum})\sim p_n+1-p_n\sum_{j=0}^{k}\frac{\ln^j\Big(\frac{1}{p_n}\Big)}{j!}.
\end{equation}
\end{theorem}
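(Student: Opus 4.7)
The plan is to set $T_n := \#\{i \leq n : X_i = X_{(n)}\}$ for the multiplicity of the maximum, interpret ``at least $k$ ties for the maximum'' as $\{T_n \geq k+1\}$ (which correctly reduces the formula to $1$ when $k=0$), and compute the asymptotics of $\mathbb{P}(T_n = j)$ for each fixed $j \geq 1$. Since Theorem~\ref{generalversion} gives $\mathbb{P}(X_{(n)} \in \{m_n, m_n+1\}) \to 1$, I would split $\{T_n = j\}$ into the two pieces $\{X_{(n)} = m_n+1\}$ and $\{X_{(n)} = m_n\}$, and treat the rest as an $o(1)$ error coming from the rapid decay $\gamma = 0$.

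For the first piece, a direct multinomial identity gives, for $j \geq 1$,
\[
\mathbb{P}(T_n = j,\, X_{(n)} = m_n+1) \;=\; \binom{n}{j}\, r_n^j\, F(m_n)^{n-j},\qquad r_n := F(m_n+1)-F(m_n).
\]
From Theorem~\ref{generalversion} one extracts $F(m_n)^n \sim p_n$ and $F(m_n) \to 1$, hence $n(1-F(m_n)) \sim \ln(1/p_n)$, while the hypothesis $\gamma = 0$ yields $n(1-F(m_n+1)) = o(\ln(1/p_n))$; combining these gives $n r_n \sim \ln(1/p_n)$ and then $\binom{n}{j} r_n^j F(m_n)^{n-j} \sim p_n\,\ln^j(1/p_n)/j!$. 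For the second piece, setting $q_n := \mathbb{P}(X_1 = m_n)$, the condition $\gamma = 0$ forces $q_n \sim 1 - F(m_n-1)$, and since Theorem~\ref{generalversion} implies $F(m_n-1)^n \to 0$, one has $n q_n \to \infty$. Therefore, conditional on $X_{(n)} = m_n$ the multiplicity $T_n$ is Binomial$(n,q_n)$ concentrated around its mean $n q_n \to \infty$, so $\mathbb{P}(T_n \leq K,\, X_{(n)} = m_n) \to 0$ for every fixed $K$. Summing over $j = 1,\dots,k$ then gives $\mathbb{P}(T_n \leq k) \sim p_n\sum_{j=0}^{k}\ln^j(1/p_n)/j! - p_n$, and passing to the complement recovers \eqref{tiesmaxima}.

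The main obstacle is to secure the two $o(1)$ errors \emph{uniformly} along subsequences where $p_n$ may approach either endpoint of $[0,1]$. The contribution of $\{X_{(n)} \notin \{m_n,m_n+1\}\}$ follows from iterating $\gamma = 0$ once more at level $m_n+2$ and once below $m_n$, which is straightforward. More delicate is the concentration of Binomial$(n,q_n)$ away from bounded values: a Chernoff-type bound is needed whose implicit constants do not degenerate when $\ln(1/p_n)$ is small (i.e.\ when $p_n \to 1$ and the two-value cluster is heavily skewed toward $m_n+1$, making $n q_n$ only slowly divergent). Once these uniform estimates are in hand, the remainder is a routine asymptotic expansion.
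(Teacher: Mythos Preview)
Your proposal is correct and follows essentially the same route as the paper. The paper phrases the argument through the order statistic $X_{(n-k)}$ and its binomial CDF formula
\[
\mathbb{P}(X_{(n-k)}\leq x)=\sum_{j=0}^{k}\binom{n}{j}F(x)^{n-j}(1-F(x))^{j},
\]
evaluating it at $x=m_n-1$ (your ``second piece'': this gives $\mathbb{P}(X_{(n-k)}\leq m_n-1)\to 0$, i.e.\ at least $k+1$ values are $\geq m_n$ w.h.p.) and at $x=m_n$ (your ``first piece'': this produces the truncated exponential series in $\ln(1/p_n)$). You instead work directly with the multiplicity $T_n$ via the multinomial identity $\mathbb{P}(T_n=j,\,X_{(n)}=m_n+1)=\binom{n}{j}r_n^{j}F(m_n)^{n-j}$ and a binomial concentration argument on $\{X_{(n)}=m_n\}$; these are the same computations in a different dress, since ``at least $k$ ties'' is exactly $\{X_{(n-k)}=X_{(n)}\}$. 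One small imprecision: conditional on $X_{(n)}=m_n$ the law of $T_n$ is not literally $\mathrm{Bin}(n,q_n)$, but rather $\mathrm{Bin}(n,q_n/F(m_n))$ conditioned to be $\geq 1$; since $F(m_n)\to 1$ and $nq_n\sim z_n\to\infty$ this makes no difference, and in fact the crude bound $\sum_{j\leq K}\binom{n}{j}q_n^{j}F(m_n-1)^{n-j}\lesssim z_n^{K}e^{-z_n}\to 0$ already gives what you need without Chernoff.
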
 
As a byproduct, the probability of having no ties is given by $-p_n\ln p_n$, which thus oscillates  between $0$ and $\frac{1}{e}$. \\ \\ In the proof of the theorem, it will be clear that in order to determine the number of ties, one first flips a $p_n$-\y{biased} coin to determine whether the maximum $X_{(n)}$ will assume the value $m_n$ or $m_{n}+1$: if the former happens, then one expects, for all fixed $k$, to have at least $k$ ties with high probability as $n$ gets larger; if the latter happens, the number of ties is Poisson distributed with parameter $\ln(\frac{1}{p_n})$. In the case where $p_n = 0$ or $p_n = 1$ for some $n$, the right side in \eqref{tiesmaxima} equals one (according to the convention $0\ln 0:=0$). Therefore, along subsequences of $\{p_n\}$ converging to $0$ (resp. 1), the $k$-th order statistic $X_{(n-k)}$ for any fixed $k$ will be $m_n$ (resp. $m_n + 1$) with high probability, so that an arbitrarily large number of ties is expected.
\\ \\
Now, suppose that the $p_n$-biased coin gives $X_{(n)}=m_n$. It is interesting to determine how many ties are expected to occur, letting $k$ grow with $n$. This is answered by the following.
\begin{theorem}\label{lotofties}
Let $X_1,...,X_n$ be as before, and let $c>0$ be fixed. Then, there exists a sequence $z_n$ $\{z_n\}$ such that 
\begin{itemize}
\item if $c < 1$, $\mathbb P(X_{n-cz_n}>m_n-1)\rightarrow 1$,
\item if $c > 1$, $\mathbb P(X_{n-cz_n}>m_n-1)\rightarrow 0$.
\end{itemize}
\end{theorem}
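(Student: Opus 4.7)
The natural candidate is $z_n := n(1-F(m_n-1))$, i.e., the expected number of $X_i$'s at or above the level $m_n$. Reading the notation $X_{n-cz_n}$ in the statement as the order statistic $X_{(n-\lfloor cz_n\rfloor)}$, my plan is to (i) translate the event into a binomial tail event, (ii) verify $z_n \to \infty$, and (iii) apply Chebyshev's inequality.

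For (i), since the $X_i$'s are integer-valued, $X_{(n-k)} > m_n - 1$ is equivalent to the statement that at least $k+1$ of the $X_i$'s are $\geq m_n$. Hence, setting $N_n := \#\{i : X_i \geq m_n\} \sim \mathrm{Binomial}(n, q_n)$ with $q_n := 1 - F(m_n - 1)$, the probability in question is $\mathbb{P}(N_n \geq \lfloor cz_n \rfloor + 1)$.

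For (ii), Theorem \ref{generalversion} with $\gamma = 0$ gives $\mathbb{P}(X_{(n)} = m_n) + \mathbb{P}(X_{(n)} = m_n + 1) \to 1$, and in particular
\[
\mathbb{P}(X_{(n)} \leq m_n - 1) = (1 - q_n)^n \to 0,
\]
which, combined with $q_n \to 0$ (since the support of $F$ is unbounded above and hence $m_n \to \infty$), forces $z_n = nq_n \to \infty$.

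For (iii), $N_n$ has mean $z_n$ and variance at most $z_n$, so Chebyshev's inequality gives $N_n/z_n \to 1$ in probability. For $c < 1$, choosing $\varepsilon \in (0, 1-c)$ yields $\mathbb{P}(N_n \geq \lfloor cz_n \rfloor + 1) \geq \mathbb{P}(N_n \geq (1-\varepsilon)z_n) \to 1$; for $c > 1$, choosing $\varepsilon \in (0, c-1)$ yields $\mathbb{P}(N_n \geq \lfloor cz_n \rfloor + 1) \leq \mathbb{P}(N_n \geq (1+\varepsilon)z_n) \to 0$. The argument is essentially a direct binomial concentration bound; the only nontrivial input is Theorem \ref{generalversion}, needed to guarantee $z_n \to \infty$, and I do not anticipate any real obstacle. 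A sharper CLT-based analysis of the binomial would be required if one wanted to describe the critical behavior at $c = 1$, but this is outside the scope of the statement.
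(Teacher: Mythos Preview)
Your argument is correct and uses the same sequence $z_n = n(1-F(m_n-1)) = n\mathcal F(m_n-1)$ that the paper uses in its proof (the displayed definition of $z_n$ in the paper with the $-\ln$ appears to be a typo; in the proof the identity $n\mathcal F(m_n-1)=z_n$ is what is actually used). Your translation of $\{X_{(n-k)}>m_n-1\}$ into the binomial tail event $\{N_n\geq k+1\}$ is exactly the content of the paper's formula \eqref{binomialformula}, and your verification that $z_n\to\infty$ via $\mathbb P(X_{(n)}\leq m_n-1)\to 0$ is equivalent to the paper's first paragraph.

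Where you diverge is in step (iii). The paper carries out a direct asymptotic analysis of the binomial tail: it rewrites the sum, splits off a negligible far tail, approximates the remaining terms by a Poisson tail, expresses the latter via the incomplete gamma function, and then applies Laplace's method (Theorem~\ref{laplace}) together with Stirling's formula to obtain the two limits. You bypass all of this with a single Chebyshev bound, using only $\mathbb E[N_n]=z_n$, $\mathrm{Var}(N_n)\leq z_n$, and $z_n\to\infty$. This is genuinely more elementary and shorter; it also makes transparent that the result is nothing more than a weak law of large numbers for $N_n$. What the paper's heavier machinery would buy, in principle, is access to the precise asymptotics of $\mathbb P(N_n\geq cz_n)$ (exponential decay rates, the behaviour in the critical window near $c=1$), whereas Chebyshev gives only the $o(1)$ statements---but that is all the theorem claims, so your route is entirely adequate.
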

In other words, there is a phase transition for the number of ties in the top position, the critical point being $z_n$. It is worth mentioning that $z_n$ oscillates as well. 
\subsection{Multinomial allocations and \y{their} Bayesian counterpart\y{s}}
In order to adapt all previous results to the case of dependent random variables, one approach is to use Poissonization. This standard technique has been widely exploited in a variety of situations: various combinatorial problems in probability \cite{CDM}, cycle structure \cite{Shepp} and longest increasing subsequence \cite{AD} of random permutations, equivalence of ensemble in statistical mechanics \cite{Z}, \y{and many others}.\\ \\
When the randomization leads to an exponential family, from which the original distribution can be obtained by means of conditioning with respect to a sufficient statistic, the results belong to conditioning limit theory. The case where the sufficient statistic is given by $\sum i x_i$ is treated in \cite{ABT}, while here the focus will be on the sum, already considered in \cite{DH}.
\\ \\
In the following, two different allocation problems are considered. Suppose $k$ balls are dropped into $n$ boxes, and we denote by $\underline Y=(Y_1,...,Y_n)$ the box counts. If different balls are dropped independently, $\underline Y$ has a multinomial distribution; if the probabilities of falling into a certain box are unknown, it is natural to consider a Dirichlet mixture of multinomial distributions. \\ \\
The starting point is the conditional representation
\begin{equation}
\mathbb P(\max (Y_1,...,Y_n)\in A)=\mathbb P\Bigl(\max (X_1,...,X_n)\in A|\sum X_i=k\Bigr).
\end{equation}
In the case $Y$ is multinomial with parameter\y{s} $(k,p_1,...,p_n)$, the $X_i$'s can be chosen in such a way that $X_1\sim \mathrm{Poi}(\lambda p_1),..., X_n\sim \mathrm{Poi}(\lambda p_n)$ for every $\lambda$. If the $Y$ are a mixture of multinomial distributions with symmetric Dirichlet kernel with hyperparameter $r$, then the $X_i$'s can be chosen to be in the negative binomial family with parameter $\mathrm{NB(r,p)}$. Bayes' theorem then implies 
\begin{equation}\label{Bayestrick}
\mathbb P(\max (Y_1,...,Y_n)\in A)=\mathbb P(\max (X_1,...,X_n)\in A)\frac{\mathbb P(\sum \tilde X_i=k)}{\mathbb P(\sum X_i=k)},
\end{equation}
where $\tilde X_i$ is the law of $X_i$ conditioned on $\max(X_1,...,X_n)\in A$. Notice that all the three terms \y{on} the right side only involve independent random variables.\y{ }The idea is that, under the appropriate choice of the $X_i$'s on the right hand side, the ratio appearing there is approximately one, so that 
\begin{align*}
\mathbb P(\max (Y_1,...,Y_n)\in A)\sim \mathbb P(\max (X_1,...,X_n)\in A).
\end{align*}
Notice that in general this does not require either side to have a limit: the only important aspect is that the distribution of $\max (X_1,...,X_n)$ and $\max (Y_1,...,Y_n)$ merge together, and then everything about the former (e.g., convergence, limit points etc.) carries over to the latter. For different notions of merging, corresponding to different notions of distance between $f_n(X_1,...,X_n)$ and $f_n(Y_1,...,Y_n)$, the reader is referred to \cite{ADF}. \\ \\
The main results of this paper for the allocation models are the following.
\begin{theorem}\label{multinomial}
Let $(Y_1,...,Y_n)$ be multinomial with parameters $(k,\frac{1}{n},...,\frac{1}{n})$. Suppose that $\lambda=\frac{k}{n}$ is fixed. If $m_n, p_n$ are defined as in Theorem \ref{generalversion} for $F$ the distribution function of a Poisson with parameter $\lambda$, then one has
\begin{align*}
\mathbb P(Y_{(n)}=m_n)\sim p_n, \quad \mathbb P(Y_{(n)}=m_n+1)\sim 1-p_n.
\end{align*}
\end{theorem}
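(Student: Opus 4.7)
The plan is to apply the Poissonization machinery sketched in \eqref{Bayestrick}. Let $X_1,\ldots,X_n$ be i.i.d.\ $\mathrm{Poi}(\lambda)$, so that $(Y_1,\ldots,Y_n)$ has the law of $(X_1,\ldots,X_n)$ conditioned on $\sum_i X_i = k$. A Poisson mass function decays faster than geometrically, so assumption \eqref{onlyass} holds with $\gamma = 0$; Theorem \ref{generalversion} then supplies sequences $\{m_n\},\{p_n\}$ with $\mathbb{P}(X_{(n)} = m_n) \sim p_n$ and $\mathbb{P}(X_{(n)} = m_n+1) \sim 1 - p_n$. In particular the mass of $X_{(n)}$ concentrates on $\{m_n, m_n+1\}$, and $\mathbb{P}(X_{(n)} \leq m_n - 1) \to 0$.

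For each $M \in \{m_n - 1, m_n, m_n + 1\}$, the event $\{X_{(n)} \leq M\}$ is the product event $\bigcap_i \{X_i \leq M\}$, so the Bayes identity \eqref{Bayestrick} reads
\begin{equation*}
\mathbb{P}(Y_{(n)} \leq M) \;=\; \mathbb{P}(X_{(n)} \leq M)\cdot\frac{\mathbb{P}(\tilde S_n^{(M)} = k)}{\mathbb{P}(S_n = k)},
\end{equation*}
where $S_n = \sum_i X_i$ and $\tilde S_n^{(M)}$ is a sum of $n$ i.i.d.\ copies of $X_1$ conditioned on $X_1 \leq M$. Once I show the Bayesian ratio is $1+o(1)$ for $M \in \{m_n, m_n + 1\}$ and at least bounded for $M = m_n - 1$, I obtain $\mathbb{P}(Y_{(n)} \leq m_n) \sim p_n$, $\mathbb{P}(Y_{(n)} \leq m_n + 1) \to 1$ and $\mathbb{P}(Y_{(n)} \leq m_n - 1) \to 0$; subtracting yields the desired conclusions $\mathbb{P}(Y_{(n)} = m_n) \sim p_n$ and $\mathbb{P}(Y_{(n)} = m_n + 1) \sim 1 - p_n$.

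The hard part, and the heart of the argument, is the local limit comparison $\mathbb{P}(\tilde S_n^{(M)} = k) \sim \mathbb{P}(S_n = k)$. The denominator is $(2\pi k)^{-1/2}(1+o(1))$ by Stirling, since $S_n \sim \mathrm{Poi}(k)$. For the numerator I would invoke a local CLT for i.i.d.\ integer-valued summands, with two Poisson tail inputs: first, the recursion $\mathbb{P}(X_1 = j+1)/\mathbb{P}(X_1 = j) = \lambda/(j+1)$ gives $\mathbb{P}(X_1 > M) \sim \mathbb{P}(X_1 = M+1)$; second, the relation $\mathbb{P}(X_{(n)} \leq m_n + 1) \to 1$ is equivalent to $n\,\mathbb{P}(X_1 > m_n + 1) \to 0$, and combining these yields $n\,\mathbb{P}(X_1 > M) = o(m_n)$ for $M \in \{m_n, m_n + 1\}$. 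Since $|\mathbb{E} \tilde X_1 - \lambda| = O(M\,\mathbb{P}(X_1 > M))$, this gives $|\mathbb{E} \tilde S_n^{(M)} - k| = o(m_n^2) = o((\ln n/\ln\ln n)^2) = o(\sqrt{k})$, while $\mathrm{Var}[\tilde S_n^{(M)}] = (1+o(1))k$, so the Gaussian factor in the local CLT at $k$ tends to $1$ and the ratio tends to $1$. For $M = m_n - 1$ the simple upper bound $\mathbb{P}(\tilde S_n^{(M)} = k) = O(k^{-1/2})$ from the same local CLT keeps the Bayesian ratio bounded, completing the argument.
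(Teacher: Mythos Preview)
Your argument is correct and follows the same Poissonization/Bayes/local-CLT route as the paper. The only variation is that you condition on the product events $\{X_{(n)}\le M\}$ (so the tilded variables are exactly i.i.d.\ truncated Poissons) and then take differences, whereas the paper conditions on $\{X_{(n)}=\tilde m_n\}$, removes the maximum, and applies the local CLT to the remaining $n-1$ variables; your choice is a touch cleaner, but the substance and the moment estimates are the same.
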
 
\begin{theorem}\label{multinomialties}
Let $(Y_1,....,Y_n)$ be multinomial with parameters $(k,\frac{1}{n},...,\frac{1}{n})$. \y{Let} $\lambda=\frac{k}{n}$ be fixed. If $m_n, p_n, z_n$ are defined as in Theorems \ref{numties}, \ref{lotofties}, for $F$ the distribution function of a Poisson with parameter $\lambda$, then one has: 
\begin{itemize}
\item As $n\rightarrow +\infty$, 
\begin{align*}
 \mathbb P(\text{at least $t$ ties at $Y_{(n)}$})\sim p_n+1-p_n\sum_{j=0}^t \frac{\ln^j\Big(\frac{1}{p_n}\Big)}{j!}.
\end{align*}
\item As $n\rightarrow +\infty$, 
\begin{align*}
&\text{if } c<1, \quad \mathbb P(X_{n-cz_n}>m_n-1)\rightarrow 1, \\ \\&\text{if } c>1, \quad \mathbb P(X_{n-cz_n}>m_n-1)\rightarrow 0.
\end{align*}
\end{itemize}
\end{theorem}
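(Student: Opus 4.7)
The proof follows the Poissonization/equivalence-of-ensembles scheme outlined in the introduction. Let $X_1,\ldots,X_n$ be i.i.d.\ $\mathrm{Poi}(\lambda)$; then $(Y_1,\ldots,Y_n)$ is distributed as $(X_1,\ldots,X_n)$ conditioned on $S_n:=\sum_i X_i=k$. Since $\mathrm{Poi}(\lambda)$ satisfies \eqref{onlyass} with $\gamma=0$, Theorems \ref{numties} and \ref{lotofties} yield the desired asymptotics for the i.i.d.\ Poisson sample. By Bayes' theorem (cf.\ \eqref{Bayestrick}), the whole argument reduces, for each of the two events
\begin{equation*}
A_t=\bigl\{X_{(n-t+1)}=X_{(n)}\bigr\},\qquad A_c=\bigl\{X_{(n-cz_n)}>m_n-1\bigr\},
\end{equation*}
to proving the local independence estimate
\begin{equation*}
\frac{\mathbb{P}(S_n=k\mid X\in A)}{\mathbb{P}(S_n=k)}=1+o(1).
\end{equation*}

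The denominator equals $(1+o(1))/\sqrt{2\pi k}$ by Stirling. For the numerator, decompose $A$ according to the set $T=\{i:X_i\geq m_n\}$ of ``large'' indices and the values $(v_i)_{i\in T}$ they take: given this configuration, the remaining coordinates $(X_i)_{i\notin T}$ are i.i.d.\ Poissons truncated above at $m_n-1$, with mean and variance within $O(\mathbb{P}(X\geq m_n))$ of $\lambda$. A lattice local CLT applied to the corresponding truncated sum gives, uniformly over typical configurations (say with $|T|\leq(\ln n)^{10}$),
\begin{equation*}
\mathbb{P}\!\Big(\textstyle\sum_{i\notin T}X_i=k-\sum_{i\in T}v_i\,\Big|\,X_i<m_n\text{ for }i\notin T\Big)=\frac{1+o(1)}{\sqrt{2\pi k}},
\end{equation*}
provided $|T|m_n=o(\sqrt k)$ and $|T|m_n^2=o(k)$; contributions from atypical $|T|$ are negligible by a crude tail bound on $|T|$. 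Summing back retrieves $\mathbb{P}(S_n=k\mid X\in A)=(1+o(1))/\sqrt{2\pi k}$, closing the ratio.

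The main obstacle is the bound on $|T|$ for the event $A_c$, where $|T|$ can be of order $z_n$. One extracts from the proof of Theorem \ref{lotofties} that $z_n=\Theta(n\mathbb{P}(X=m_n))$, and then uses $n\mathbb{P}(X=m_n)=\Theta(m_n\ln(1/p_n)/\lambda)$ (via the Stirling expansion of the Poisson mass function at $m_n\sim\ln n/\ln\ln n$) to conclude $z_n=O(m_n)$ up to log factors. Hence $|T|m_n=O(m_n^2)=O((\ln n/\ln\ln n)^2)$, which is comfortably $o(\sqrt n)=o(\sqrt k)$ for fixed $\lambda$; the variance bound is easier. For $A_t$ the bound is immediate since $|T|$ can be truncated at the same $O(m_n)$ cutoff. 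With these quantitative inputs, both bullets of the theorem follow from Theorems \ref{numties} and \ref{lotofties} via the Bayes factorization.
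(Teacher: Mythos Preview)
Your proof is correct and follows the same Poissonization/local-CLT route as the paper: the paper's own argument is a one-line remark that the proof of Theorem~\ref{multinomial} goes through with $n-t$ (resp.\ $n-cz_n$) copies of the truncated variables, and your decomposition by the set $T$ of large indices is simply a more explicit implementation of that conditioning. One minor correction: since $\ln(1/p_n)=\theta_n$ can be as large as order $\sqrt{m_n/\lambda}$, the right bound is $z_n=O(m_n^{3/2})$ rather than ``$O(m_n)$ up to log factors,'' but this still yields $|T|\,m_n=O(m_n^{5/2})=o(\sqrt{k})$ comfortably.
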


\begin{theorem}\label{mixturedirichletoscillations}
Let $(Y_1,...,Y_n)$ be a (symmetric) Dirichlet mixture of multinomials with parameters $(k,r)$. Let $n,k$ be chosen in such a way that for some fixed $p$, $\frac{rp}{1-p}=\frac{k}{n}$ is fixed. If $m_n, p_n$ are defined as in Theorem \ref{generalversion} for $F$ the distribution function of a negative binomial random variable with parameters $(r,p)$, then for every $x\in \mathbb Z$ 
\begin{align*}
\mathbb P(Y_{(n)}\leq m_n+x)\sim p_n^{\gamma^x}.
\end{align*}
\end{theorem}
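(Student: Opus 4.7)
The plan is to apply the Poissonization-plus-Bayes scheme from equation \eqref{Bayestrick}. I would take $X_1,\ldots,X_n$ i.i.d.\ NB$(r,p)$; conditioning such an i.i.d.\ sample on $\sum_i X_i=k$ produces precisely the symmetric Dirichlet mixture of multinomials with parameters $(k,r)$, so
\begin{equation*}
\mathbb P\bigl(Y_{(n)}\le m_n+x\bigr)=\mathbb P\Bigl(\max_{i}X_i\le m_n+x \;\Big|\; \sum_i X_i=k\Bigr).
\end{equation*}
Bayes rewrites this as
\begin{equation*}
\mathbb P\Bigl(\max_{i}X_i\le m_n+x\Bigr)\cdot\frac{\mathbb P\bigl(\sum_i\tilde X_i=k\bigr)}{\mathbb P\bigl(\sum_i X_i=k\bigr)},
\end{equation*}
where the $\tilde X_i$ are i.i.d.\ copies of $X_1$ conditioned on $X_1\le m_n+x$ (by independence, conditioning the vector on $\max X_i\le m_n+x$ is equivalent to conditioning each coordinate). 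The ratio $\mathbb P(X_1=n+1)/\mathbb P(X_1=n)=\tfrac{n+r}{n+1}p$ tends to $p$, so condition \eqref{onlyass} holds with $\gamma=p\in(0,1)$, and Theorem \ref{generalversion} gives $\mathbb P(\max_i X_i\le m_n+x)\sim p_n^{\gamma^x}$. The statement therefore reduces to showing that the ratio of probabilities above converges to $1$.

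For the denominator, $\sum_i X_i\sim$ NB$(nr,p)$, and by the choice $k/n=\mu:=rp/(1-p)$ the integer $k$ sits at the exact mean; Stirling yields $\mathbb P(\sum_i X_i=k)\sim(2\pi n\sigma^2)^{-1/2}$ with $\sigma^2=rp/(1-p)^2$. For the numerator I would invoke Gnedenko's local limit theorem for sums of i.i.d.\ lattice variables applied to the $\tilde X_i$: their variance converges to $\sigma^2$, and their mean shift $\mu-\tilde\mu_n$ is controlled by $m_n\,\mathbb P(X_1>m_n+x)$. Since $m_n\sim\log n/|\log p|$ and NB has a geometric tail, this shift is $O(\log n/n)$, so $|k-n\tilde\mu_n|=O(\log n)=o(\sqrt n)$, the Gaussian factor $\exp(-(k-n\tilde\mu_n)^2/(2n\sigma^2))$ tends to $1$, and the ratio converges to $1$ as required.

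The main technical obstacle is that the $\tilde X_i$ form a triangular array rather than a single i.i.d.\ sequence, so one needs a local limit theorem with $o(n^{-1/2})$ remainder that is uniform in $n$. This follows from a Berry-Esseen-type estimate since the third absolute moment of $\tilde X_1$ is bounded uniformly in $n$ (for a positive variable with exponential tails, truncation at $m_n+x$ only decreases moments, and these moments converge as $n\to\infty$), or alternatively from the conditioning-limit machinery developed in \cite{DH}. Everything else, namely the computation of the mean shift and the convergence of variances for the truncated NB, reduces to routine moment estimates.
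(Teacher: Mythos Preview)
Your proposal is correct and follows essentially the same route as the paper: the same conditional representation via i.i.d.\ NB$(r,p)$ variables, the same Bayes identity \eqref{Bayestrick}, the appeal to Theorem~\ref{generalversion} for the unconditional maximum, and the local central limit theorem for both $\sum X_i$ and $\sum\tilde X_i$ to show the ratio tends to $1$. The paper's own proof is terser---it simply asserts that the mean and variance of $\tilde X_1$ are asymptotically those of $X_1$ and invokes Lemma~\ref{LCLT}---whereas you are more explicit about the mean shift being $O(\log n)$ and about the triangular-array caveat, which the paper does not discuss. One small imprecision: condition~\eqref{onlyass} is stated for the tail ratio $\mathcal F(n+1)/\mathcal F(n)$, not the point-mass ratio, though of course for NB the latter converging to $p$ forces the former to do so as well.
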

Borrowing language from statistical mechanics, in the multinomial allocations problem (as well as in its Bayesian version), several features involving the top order statistics can be equivalently studied in the microcanonical and canonical picture \cite{Maxentropy}.
\section{The independent case}
Let $X_1,..., X_n$ be i.i.d. random variables with common distribution $F$, satisfying assumption \eqref{onlyass}. Denote by $\mathcal F:=1-F$ the tail of the distribution. A real extension $\mathcal G$ of $\mathcal F$ will be considered, with the properties
\begin{itemize}
\item $\mathcal G(n)=\mathcal F(n)$ for $n\in \mathbb Z$.
\item $\mathcal G$ is continuous.
\item $\mathcal G$ is decreasing.
\item $\mathcal G$ is log-convex.
\end{itemize}
Such an extension always exists: for example, the log-linear one provided by Anderson in \cite{Anderson} works (yet, sometimes, this is not the most natural one, as in the Poisson case). Notice that the assumption of log-convexity ensures, for example, the existence of
\begin{align*}
\lim_{x\rightarrow +\infty}\frac{\mathcal G(x+\frac{1}{2})}{\mathcal G(x)},
\end{align*}
since the function $x\rightarrow \frac{\mathcal G(x+\frac{1}{2})}{\mathcal G(x)}$ is decreasing. Combined with \eqref{onlyass}, one obtains
\begin{equation}\label{radicesimple}
\lim_{x\rightarrow +\infty}\frac{\mathcal G(x+\frac{1}{2})}{\mathcal G(x)}=\sqrt{\gamma}.
\end{equation}
In a similar way, because $\mathcal G$ is continuous and log-convex, for every $\epsilon\in \mathbb (0,1)$ one has
\begin{equation}\label{radice}
\lim_{x\rightarrow +\infty}\frac{\mathcal G(x+\epsilon)}{\mathcal G(x)}=\gamma^{\epsilon}.
\end{equation}
Let $x_n$ be a solution to $\mathcal G(x_n)=\frac{1}{n}$. Owing to the continuity and monotonicity of $\mathcal G$, such a solution exists and it is unique. Set $m_n$ to be the floor of $x_n+\frac{1}{2}$. By definition, $m_n\in [x_n-\frac{1}{2},x_n+\frac{1}{2}]$. Define also,
\begin{equation}\label{definitionpn}
\theta_n:=\frac{\mathcal G(m_n)}{\mathcal G(x_n)}, \quad p_n:=e^{-\theta_n}.
\end{equation}
Finally, let 
\begin{align*}
z_n:=-\ln \Bigg(\frac{\mathcal G(m_n-1)}{\mathcal G(x_n)}\Bigg).
\end{align*}
\begin{remark}
While the above definitions of $m_n, x_n, p_n, z_n$ depend on the particular choice of $\mathcal G$, all the results stated in the above theorems are equivalent for all such choices. \\ Consider the case $\gamma\in (0,1)$ in Theorem \ref{generalversion}, the other results being analogous. Let $\mathcal G$ and $\mathcal{\tilde G}$ be two different extensions of $\mathcal F$, and let $\tilde x_n, \tilde m_n, \tilde p_n$ be the quantities corresponding to $x_n, m_n, p_n$, obtained by using the extension $\mathcal{\tilde G}$ instead.
The first claim is that 
\begin{align*}
x_n-\tilde x_n\rightarrow 0.
\end{align*}
Indeed, first notice that the floor of both $x_n$ and $\tilde x_n$ is the largest integer $t_n$ such that $\mathcal F(t_n)\geq \frac{1}{n}$. Therefore, $x_n=t_n+\epsilon_n$ and $\tilde x_n=t_n+\tilde \epsilon_n$, with $\epsilon_n, \tilde \epsilon_n\in [0,1)$. Suppose, toward contradiction, that along some subsequence $|\epsilon_n-\tilde \epsilon_n|\sim \epsilon>0$ for some fixed $\epsilon$. In this case, applying \eqref{radice} leads to 
\begin{align*}
\mathcal F(t_n)\gamma^{\epsilon_n}\sim \mathcal G(x_n)=\frac{1}{n}=\mathcal {\tilde G}(\tilde x_n)\sim \mathcal F(t_n)\gamma^{\tilde \epsilon_n},
\end{align*}
which in turn implies $\gamma^{\epsilon}=1$ in the limit, a contradiction. \\ \\ Therefore, along subsequences of $\{x_n\}$ (and thus $\{\tilde x_n\}$, since $x_n-\tilde x_n\rightarrow 0$) that are bounded away from half-integers, the definition of $m_n$ and $\tilde m_n$ will eventually be the same, and consequently $p_n$ will be eventually the same (since $p_n$ depends on $\mathcal G$ only via $m_n$) regardless of the choice of $\mathcal G$. \\  Along subsequences of $\{x_n\}$ (and $\{\tilde x_n\}$) for which $x_n-\left \lfloor{x_n}\right \rfloor \rightarrow \frac{1}{2}$, it may be the case that $m_n$ and $\tilde m_n$, defined from $x_n$ and $\tilde x_n$ respectively, differ by one. \\ 
Without loss of generality, assume that along some subsequence $m_n-x_n\rightarrow -\frac{1}{2}$ and $\tilde m_n-\tilde x_n\rightarrow \frac{1}{2}$. In this case, combining \eqref{definitionpn}, \eqref{radicesimple}, one obtains 
\begin{align*}
p_n\sim e^{-\frac{1}{\sqrt{\gamma}}}, \quad \tilde p_n\sim e^{-\sqrt{\gamma}}.
\end{align*}
In particular, the use of the extension $\mathcal {\tilde G}$ leads to the same asymptotic obtained by using $\mathcal G$, since  
\begin{align*}
\mathbb P(X_{(n)}\leq m_n+x)=\mathbb P(X_{(n)}\leq \tilde m_n-1+x)\sim {(\tilde p_n)}^{\gamma^{-1+x}}\sim e^{-\gamma^{-\frac{1}{2}+x}}\sim p_n^{\gamma^x}.
\end{align*}
\begin{remark}
The freedom in the choice of $\mathcal G$ is not merely an abstract curiosity. As previously mentioned, there are cases (e.g., Poisson distribution) where a certain $\mathcal G$ can be obtained by appropriately replacing a sum with an integral (in the Poisson distribution case, an incomplete gamma function), and such $\mathcal G$ can be easily checked to be log-convex (for classical discrete distribution, this often boils down to the log-convexity of the gamma function). In those cases, it is much easier to use such extensions in order to obtain numerical approximations for $m_n$ and $p_n$, rather than using the artificial log-linear extension introduced by Anderson \cite{Anderson}. 
\end{remark} 
\end{remark}
\subsection{Proof of Theorem \ref{generalversion}}
\begin{proof}[Proof of Theorem \ref{generalversion}]
Owing to the i.i.d. assumption, for all $x\in\mathbb Z$
\begin{align*}
\mathbb P(X_{(n)}\leq x)&=\Bigl(1-\mathcal F(x)\Bigr)^n.
\end{align*}
In the following, note that for $z>-1$,
\begin{equation}\label{logineq}
\frac{z}{1+z}\leq \ln(1+z)\leq z.
\end{equation}
Thus, for every $x\in \mathbb Z$, 
\begin{equation}\label{Bounds}
\mathbb P(X_{(n)}\leq x)=\Bigl(1-\mathcal F(x)\Bigr)^n=\Bigl(1-\frac{\mathcal F(x)}{\mathcal G(x_n)}\frac{1}{n}\Bigr)^n\in \Bigl[e^{-\frac{\mathcal F(x)}{F(x)\mathcal G(x_n)}}, e^{-\frac{\mathcal F(x)}{\mathcal G(x_n)}}\Bigr],
\end{equation}
where the definition of $\mathcal G(x_n)=\frac{1}{n}$ is used in the second equality, while the bounds are derived from \eqref{logineq} applied to $z=-\frac{\mathcal F(x)}{n\mathcal G(x_n)}$ (so that $1+z=F(x)$).\\ \\
It is convenient to split the proof in the two cases $\gamma=0, \gamma>0$:
\begin{itemize}
\item Case $\gamma=0$: the choice of $x=m_n-1$ leads to 
\begin{align*}
\mathbb P(X_{(n)}\leq m_n-1)\leq e^{-\frac{\mathcal F(m_n-1)}{\mathcal G(x_n)}}\leq e^{-\frac{\mathcal G(x_n-\frac{1}{2})}{\mathcal G(x_n)}}\rightarrow 0
\end{align*}
where the first inequality follows from the upper bound in \eqref{Bounds} and the second follows from the both the monotonicity of $\mathcal G$ and that $m_n\in [x_n-\frac{1}{2}, x_n+\frac{1}{2}]$. The last step is a consequence of the assumption $\gamma=0$ and equation \eqref{radice}. If $x=m_n+1$, then the lower bound in \eqref{Bounds} is attained and the very same argument leads to
\begin{align*}
\mathbb P(X_{(n)}\leq m_n+1)&\geq \exp\Big\{-\frac{\mathcal G(x_n+\frac{1}{2})}{\mathcal G(x_n)F(m_n+1)}\Big\}\\&\rightarrow 1.
\end{align*}
This result proves the clustering effect on the two values $m_n, m_n+1$. In general, the same computations lead to
\begin{align*}
\mathbb P(X_{(n)}\leq m_n)\sim \exp\Big\{-\frac{\mathcal G(m_n)}{\mathcal G(x_n)}\Big\}=p_n,
\end{align*}
from which the result
\begin{align*}
\mathbb P(X_{(n)}\leq m_n)\sim p_n, \quad \mathbb P(X_{(n)}=m_n+1)\sim 1-p_n,
\end{align*} 
follows. As for the last statement in the Theorem, notice that by the definition of $x_n$
\begin{align*}
\frac{\mathcal G(x_{n+1})}{\mathcal G(x_{n})}=\frac{\frac{1}{n+1}}{\frac{1}{n}}\rightarrow 1.
\end{align*}
Suppose, toward contradiction, that $x_{n+1}-x_{n}\rightarrow \epsilon>0$ along some subsequence. Owing to $\eqref{radice}$, the limit $\ell:=\lim_{n\rightarrow +\infty}\frac{\mathcal G(x_n+\epsilon)}{\mathcal G(x_n)}$ exists and it is equal to zero (thanks to the assumption $\gamma=0$). Thus, necessarily $x_{n+1}-x_{n}\rightarrow 0$. By continuity of $\mathcal G$,
\begin{align*}
\mathcal G(x_n)-\mathcal G(x_{n+1})\rightarrow 0.
\end{align*}
Define $N_i$ as the increasing sequence of natural numbers for which $x_{N_i}\leq i+\frac{1}{2}, x_{N_i+1}>i+\frac{1}{2}$ for all integers $i$. Consider $x_n$ for $n\not \in\{N_i\}_{i\in \mathbb N}$, and recall that $m_n$ is the floor of $x_n+\frac{1}{2}$. For such $n$'s, one has $m_n=m_{n+1}$, and consequently $p_{n+1}\leq p_n$ (owing to the monotonicity of $\mathcal G$ and \eqref{definitionpn}), and $p_{n+1}-p_n\rightarrow 0$ (owing to the continuity of $\mathcal G$). Finally, notice that $N_{i+1}-N_i\rightarrow \infty$ since $x_n\rightarrow +\infty$, $x_{n+1}-x_n\rightarrow 0$, which concludes the proof. 
\item Case $\gamma>0$: for all fixed $x\in \mathbb Z$ one has, owing to \eqref{Bounds} and \eqref{onlyass}, 
\begin{align*}
\frac{\mathcal F(m_n+x)}{\mathcal G(x_n)}\sim \frac{\mathcal F(m_n+x)}{\mathcal G(x_n)F(m_n+x)}\sim \frac{\mathcal G(m_n)\gamma^x}{\mathcal G(x_n)}.
\end{align*}
Therefore, in this case
\begin{align*}
\mathbb P(X_{(n)}\leq m_n+x)&= \Bigl(1-\mathcal F(m_n+x)\Bigr)^n\\&\sim \exp\Big\{-\frac{\mathcal G(m_n)}{\mathcal G(x_n)}\gamma^x\Big\}\\&= p_n^{\gamma^x},
\end{align*}
which concludes the proof in the case $\gamma>0$ (notice that, if $\gamma=1$, one has $\gamma^x\equiv 1$).
\end{itemize}
\end{proof}
\subsection{Proof of Theorems \ref{numties} and \ref{lotofties}}
\begin{proof}[Proof of Theorem \ref{numties}]
First of all, notice that the assumption $\gamma=0$ guarantees that $z_n\rightarrow +\infty$. Moreover, since by definition
\begin{align*}
n\mathcal F(m_n-1)=z_n, 
\end{align*} 
and $m_n\rightarrow +\infty$, \y{it follows that} $z_n=o(n)$. 
Now, consider the binomial formula for the order statistics 
\begin{equation}\label{binomialformula}
\mathbb P(X_{(n-k)}\leq x)=\sum_{j=0}^{k}{{n}\choose {j}}[F(x)]^{n-j}[1-F(x)]^{j}.
\end{equation}
If $x=m_{n}-1$, $j$ is fixed and $n\rightarrow +\infty$, one has
\begin{align*}
{n\choose j}\Big(1-\frac{z_n}{n}\Big)^{n-j}\Big(\frac{z_n}{n}\Big)^{j}\sim \frac{z_n^j}{j!}e^{-z_n}\rightarrow 0.
\end{align*}
When $k$ is fixed, since there are only finitely many terms in \eqref{binomialformula} and each of these converges to $0$, one concludes 
\begin{equation}\label{clusterties}
\mathbb P(X_{(n-k)}\leq m_n-1)\rightarrow 0.
\end{equation}
Now fix $p\in (0,1)$, and look at a subsequence $p_{n}\rightarrow p$ (where for simplicity the $p_{n_k}$ subsequence was renamed $p_n$). Then, since $\mathbb P(X_{(n)}>m_{n}+1)\rightarrow 0$, along this subsequence
\begin{align*}
\mathbb P(X_{(n-k)}=m_{n}+1)&\sim \mathbb P(X_{(n-k)}>m_{n})\\&=1-\sum_{j=0}^{k}{n\choose j}\Big(1-\frac{\theta_n}{n}\Big)^{n-j}\Big(\frac{\theta_n}{n}\Big)^{j}\\&\rightarrow p\Big(\frac{1}{p}-\sum_{j=0}^{k} \frac{\ln^j(\frac{1}{p})}{j!}\Big), 
\end{align*}
where it was used that $\mathcal F(m_n)=\frac{\mathcal F(m_n)}{\mathcal G(x_n)}\mathcal G(x_n)=\frac{\mathcal G(m_n)}{\mathcal G(x_n)}\Big(\frac{1}{n}\Big)=\frac{\theta_n}{n}$.

\end{proof}

Before moving \y{on} to the proof of Theorem \ref{lotofties},  recall the incomplete gamma function
\begin{align*}
\Gamma(k,z)=\int_0^{z}t^{k-1}e^tdt.
\end{align*}
Notice that for $k$ \y{an} integer, integration by parts shows \y{that}
\begin{equation}\label{twopoisson}
e^{-z}\sum_{j=k}^{+\infty}\frac{z^j}{j!}=\frac{\Gamma(k+1,z)}{\Gamma(k+1)}.
\end{equation}
To find asymptotics for $\Gamma(k,z)$, it is useful to recall the Laplace asymptotic formula (see, e.g., Theorem 3.5.3 in \cite{AGZ}):
\begin{theorem}[Laplace asymptotic formula]\label{laplace}
Let $S(x)$ be a smooth function on $(a,b)$. Then 
\begin{itemize}
\item If $S'(x)<0$ for all $x\in (a,b)$, then 
\begin{align*}
\int_a^be^{-mS(x)}f(x)dx=\frac{1}{m}\frac{1}{S'(b)}f(b)e^{-mS(b)}\Big(1+O\Big(\frac{1}{m}\Big)\Big),\quad  m \rightarrow +\infty.
\end{align*}
\item If $S$ has a unique nondegenerate minimum $x_0$ in $(a,b)$, then
\begin{align*}
\int_a^be^{-mS(x)}f(x)dx=\sqrt{\frac{2\pi}{mS''(x_0)}}f(x_0)e^{-mS(x_0)}\Big(1+O\Big(\frac{1}{m}\Big)\Big),\quad m\rightarrow +\infty.
\end{align*}
\end{itemize}
\end{theorem}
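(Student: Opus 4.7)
My plan is to prove both parts of Theorem \ref{laplace} by the classical Laplace method. In both cases the heuristic is identical: since the integrand $e^{-mS(x)}f(x)$ has a sharp peak at the point where $S$ attains its minimum on $[a,b]$ (the right endpoint $b$ in the monotone case, the interior critical point $x_0$ in the nondegenerate case), the integral is, up to exponentially small error, determined by a small neighborhood of that point, and on this neighborhood $S$ can be straightened out to a pure linear (resp.\ pure quadratic) function by a smooth change of variable.

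For the monotone case I would use the substitution $u=m\bigl(S(x)-S(b)\bigr)$, which is a $C^\infty$ diffeomorphism of $[a,b]$ onto $\bigl[0,\,m(S(a)-S(b))\bigr]$ because $S'<0$. Since $du=mS'(x)\,dx$, the integral rewrites as
\[
\frac{e^{-mS(b)}}{m}\int_0^{m(S(a)-S(b))} e^{-u}\,\frac{f(x(u))}{-S'(x(u))}\,du.
\]
As $m\to+\infty$ the upper limit tends to $+\infty$, the tail $u\geq \log m$ is exponentially negligible, and the smooth amplitude $A(u):=f(x(u))/(-S'(x(u)))$ takes the value $f(b)/(-S'(b))$ at $u=0$. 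Taylor expanding $A(u)=A(0)+u\,A'(0)+O(u^2)$ and using $\int_0^{\infty}u^k e^{-u}\,du=k!$ delivers the leading-order contribution together with an explicit $O(1/m)$ relative correction, matching the form of the stated asymptotic.

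For the interior-minimum case I would fix a small $\delta>0$ with $S''\geq \tfrac12 S''(x_0)>0$ on $(x_0-\delta,\,x_0+\delta)$ and split the integral at $x_0\pm\delta$. Outside this interval $S\geq S(x_0)+c$ for some $c>0$, so that contribution is bounded by $\|f\|_\infty(b-a)\,e^{-mS(x_0)}e^{-mc}$, far smaller than the target $e^{-mS(x_0)}/\sqrt{m}$. Inside, I Taylor expand $S(x)=S(x_0)+\tfrac12 S''(x_0)(x-x_0)^2+R_3(x)$ with $|R_3(x)|\leq C|x-x_0|^3$ and $f(x)=f(x_0)+(x-x_0)f'(x_0)+O((x-x_0)^2)$, then rescale by $u=\sqrt{mS''(x_0)}\,(x-x_0)$ to convert the main term into a truncated Gaussian integral of the form
\[
\sqrt{\frac{2\pi}{mS''(x_0)}}\,f(x_0)\,e^{-mS(x_0)}\bigl(1+o(1)\bigr),
\]
with the truncation error at $|u|=\delta\sqrt{mS''(x_0)}$ exponentially small. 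The main obstacle is precisely the bookkeeping needed for the sharp $(1+O(1/m))$ rate: the leading asymptotic falls out immediately from dominated convergence once the Gaussian rescaling is performed, but pinning down the next order requires tracking the cubic remainder in $S$ and the linear term in $f$ against the Gaussian measure, observing that the resulting odd moments vanish by symmetry so the corrections enter at order $1/m$ and not $1/\sqrt{m}$, and verifying that these estimates hold uniformly for $|x-x_0|\leq \delta$.
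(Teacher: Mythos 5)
The paper does not prove Theorem~\ref{laplace}; it simply cites the result as Theorem~3.5.3 of \cite{AGZ}, so there is no in-paper argument to compare against. Your proposal is the standard textbook derivation of Laplace's method, and both parts are sound. One step worth making explicit in the monotone case: after the substitution $u = m\bigl(S(x)-S(b)\bigr)$, the inverse map $x(u)$, and therefore the amplitude $A(u)=f(x(u))/\bigl(-S'(x(u))\bigr)$, depends on $m$. Writing $v=S(x)-S(b)$ with $m$-free inverse $x(v)$ and $g(v)=f(x(v))/\bigl(-S'(x(v))\bigr)$, one has $A(u)=g(u/m)$, hence $A'(0)=g'(0)/m$; it is precisely this hidden factor of $1/m$ in $A'(0)$ that turns the linear term of your Taylor expansion into an $O(1/m)$ relative correction and the quadratic remainder into $O(1/m^2)$ (equivalently, keep the integral in the form $e^{-mS(b)}\int_0^{S(a)-S(b)}e^{-mv}g(v)\,dv$ and invoke Watson's lemma). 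Your interior-minimum argument---split at $x_0\pm\delta$, Gaussian rescaling, vanishing of odd moments to push the correction from $1/\sqrt{m}$ to $1/m$---is the standard one and is correct. As a side remark, your computation correctly produces the positive constant $f(b)/\bigl(-S'(b)\bigr)$; the factor $1/S'(b)$ in the statement of Theorem~\ref{laplace} appears to carry the wrong sign, since $S'(b)<0$ there.
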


\begin{proof}[Proof of Theorem \ref{lotofties}]
Using \eqref{binomialformula} 
\begin{align*}
\mathbb P(X_{(n-cz_n+1)}>m_{n}-1)&=1-\sum_{j=0}^{cz_n-1}{n\choose j}[F(m_n-1)]^{n-j}[1-F(m_n-1)]^{j}\\&=\sum_{j=cz_n}^{n}{n\choose j}\Big(1-\frac{z_n(1+o(1))}{n}\Big)^{n-j}\Big(\frac{z_n(1+o(1))}{n}\Big)^j.
\end{align*}
Now, given $c\in (0,+\infty)$, consider $m=m(c)$ large (to be fixed later). The sum above can be split into
\begin{align*}
\mathbb P(X_{(n-cz_n+1)}>m_{n}-1)&= \sum_{j=cz_n}^{mcz_n-1}{n\choose j}\Big(1-\frac{z_n(1+o(1))}{n}\Big)^{n-j}\Big(\frac{z_n(1+o(1))}{n}\Big)^j\\&+\sum_{j=mcz_n}^{n}{n\choose j}\Big(1-\frac{z_n(1+o(1))}{n}\Big)^{n-j}\Big(\frac{z_n(1+o(1))}{n}\Big)^j\\&=: A+B
\end{align*}
First, consider the second summand: since ${n\choose j}\leq \frac{n^j}{j!}$, each term can be bounded: \begin{align*}
{n\choose j}\Big(1-\frac{z_n(1+o(1))}{n}\Big)^{n-j}\Big(\frac{z_n(1+o(1))}{n}\Big)^j\leq \frac{z_n^j}{j!}\rightarrow 0.
\end{align*}
By choosing $m$ large enough that $mc>e$, using 
\begin{align*}
\frac{z_{n}^{j+1}}{(j+1)!}\leq \frac{z_n^{j}}{j!}\frac{z_n}{j}\leq \frac{z_n^j}{j!}\frac{1}{mc},
\end{align*}
$B$ can be bounded by a geometric series. Therefore, using crude bounds with Stirling's approximation, 
\begin{align*}
B\leq \sum_{j=mcz_n}^{n}\frac{z_n^j}{j!}\leq \frac{mc}{mc-1}\frac{z_n^{mcz_n}}{(mcz_n)!}\leq \frac{mc}{mc-1} \Big(\frac{e}{mc}\Big)^{mcz_n}\rightarrow 0.
\end{align*}
Going back to the first summand, to \y{finish} the proof it is enough to show that 
\begin{align*}
A=\sum_{j=cz_n}^{mcz_n-1}{n\choose j}\Big(1-\frac{z_n(1+o(1))}{n}\Big)^{n-j}\Big(\frac{z_n(1+o(1))}{n}\Big)^j
\end{align*}
converges to $0$ if $c > 1$ and converges to $1$ if $c < 1$, regardless of $m$. \y{In} this \y{regime}, $j\rightarrow +\infty, j=o(n)$, \y{so} 
\begin{align*}
A\sim \sum_{j=cz_n}^{mcz_n-1}\frac{e^{-z_n}z_n^j}{j!}\sim \sum_{cz_n}^{+\infty}\frac{e^{-z_n}z_n^{j}}{j!},
\end{align*}
where the last step follows from the fact that $B\rightarrow 0$. Using \eqref{twopoisson}, 
\begin{align*}
A\sim \frac{\Gamma(cz_n+1,z_n)}{\Gamma(k_n(c))}=\frac{\int_0^{z_n}t^{cz_n}e^{-t}dt}{(cz_n)!}.
\end{align*}
Changing the variable $t=cz_ns$ and using Stirling\y{'s} approximation, \y{we obtain}
\begin{align*}
A&\sim \frac{1}{(cz_n)!}\int_0^{\frac{1}{c}}cz_n\exp\Big(-scz_n+cz_n\ln(cz_n))+cz_n\ln s\Big)ds\\&\sim \frac{cz_n}{e^{-cz_n}\sqrt{2\pi cz_n}}\int_0^{\frac{1}{c}}e^{-cz_n[s-\ln s]}ds.
\end{align*}
Since the function $S(s)=s-\ln s$ has a global minimum at $s=1$, with $S(0)=1$, $S''(1)=1$, if $c>1$ the first part of Theorem \ref{laplace} gives
\begin{align*}
A\sim \frac{\sqrt{cz_n}}{e^{-cz_n}\sqrt{2\pi}}\frac{1}{1-\frac{1}{c}}e^{(-cz_n)(\frac{1}{c}+\ln c)}\rightarrow 0,
\end{align*}
since $1<\frac{1}{c}+\ln c$. In the case $c>1$, the second part of Theorem \ref{laplace} leads to
\begin{align*}
A\sim \frac{\sqrt{cz_n}}{e^{-cz_n}\sqrt{2\pi}}\frac{\sqrt{2\pi}}{\sqrt{cz_n}}e^{-cz_n}=1,
\end{align*}
which concludes the proof.
\end{proof}
\subsection{Some examples: the Poisson, the negative binomial, and the discrete Cauchy}

Consider the case where $X_1 \sim \mathrm{Poi}(\lambda)$. Anderson (\cite{Anderson}) already proved the result 
\begin{align*}
\mathbb P(X_{(n)}\in \y{\{} m_n,m_{n}+1\y{\}})\rightarrow 1. 
\end{align*}
In the language of this paper, the Poisson distribution falls into the case $\gamma=0$ of Theorem \ref{generalversion}, since
\begin{align*}
\mathcal F(x)\rightarrow e^{-\lambda}\frac{\lambda^{x+1}}{\Gamma(x+2)}.
\end{align*}
Notice that the most natural \y{choice} for $\mathcal G$ in this case is given by the incomplete gamma function, rather than the log-linear extension. Following the proof of \y{Theorem} \ref{generalversion}, it is easy to see that
\begin{align*}
\mathbb P(X_{(n)}\not\in \{m_n,m_{n}+1\})\leq \Big(\frac{\lambda}{x_n+1}\Big)^{m_n-x_n}.
\end{align*}
This bound is important, as it shows the clustering may emerge even for small values of $n$, provided that $\lambda$ is small. As for the value of $m_n$, in \cite{Kimber}, it is shown that in first approximation
\begin{equation}\label{asymptoticuseless}
x_n \sim \frac{\ln n}{\ln \ln n}.
\end{equation}
However, this estimate is extremely poor, as is shown in \cite{Briggs}. In particular, if $W(z)$ is the solution to $e^{W(z)}W(z)=z$ (known as the Lambert function, see \cite{Knuth}), then a much better approximation is given by
\begin{align*}
\tilde x_n= y_n+\frac{\ln \lambda-\lambda-\frac{1}{2}\ln(2\pi)-\frac{3}{2}\ln(y_n)}{\ln(y_n)-\ln \lambda}, \quad y_n=\frac{\ln n}{W\Big(\frac{\ln n}{\lambda e}\Big)}.
\end{align*}
\\ \\
The Negative binomial distribution $N(r,p)$ falls into the second category of Theorem \ref{generalversion}, with $\gamma=p$, since 
\begin{align*}
\frac{\mathcal F(n+1)}{\mathcal F(n)}=\frac{\Gamma(n+2+r)}{\Gamma(n+1)\Gamma(r)}\frac{\Gamma(n+1)\Gamma(r)}{\Gamma(n+1+r)}\frac{\int_0^pt^{n+1}(1-t)^{r-1}dt}{\int_0^pt^{n}(1-t)^{r-1}}.
\end{align*}
and, using \ref{laplace} and the property of the gamma function, it is easy to obtain
\begin{align*}
\frac{\mathcal F(n+1)}{\mathcal F(n)}=\frac{n+1+r}{n+1}p\Big(1+o(\frac{1}{n})\Big)\rightarrow p.
\end{align*}
Finally, the discrete Cauchy distribution falls into the third regime, since in that case
\begin{align*}
\frac{\mathcal F(n+1)}{\mathcal F(n)}=\frac{\frac{1}{1+(n+1)^2}}{\frac{1}{1+n^2}}\rightarrow 1.
\end{align*}

\section{The dependent case}
As explained in the introduction, it is possible to export the previous results to a certain class of allocation problems.
The main ingredient is the local central limit theorem (see, \y{e.g.}, \cite{Gne-Kol})\y{.}
\begin{lemma}[Local Central Limit theorem]\label{LCLT}
Let $X_1,...,X_n$ be discrete i.i.d. random variables, with $\mathbb E(X_1)=\mu, \mathbb Var(X_1)=\sigma^2$, such that the values taken on by $X_1$ are not contained in some infinite \y{progression} $a + q \mathbb Z$ for integers $a,q$ with $q > 1$. Then, for every integer $t$, 
\begin{align*}
\mathbb P\Bigl(\sum_{i=1}^n X_i=t\Bigr)=\frac{1}{\sqrt{2\pi n\sigma}}\exp\Big(-\frac{(t-n\mu)^2}{2n\sigma^2}\Big)+o\Big(\frac{1}{\sqrt{n}}\Big),
\end{align*}
the error being \y{uniform} in $t$.
\end{lemma}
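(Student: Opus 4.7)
The plan is to prove the local CLT via Fourier inversion. Since the lattice hypothesis implicitly requires $X_1$ to be integer-valued, the probability mass function of $S_n := \sum_{i=1}^n X_i$ admits the Fourier representation
\begin{equation*}
\mathbb P(S_n = t) = \frac{1}{2\pi}\int_{-\pi}^{\pi} e^{-it\theta}\varphi(\theta)^n\,d\theta,
\end{equation*}
where $\varphi(\theta) := \mathbb E(e^{i\theta X_1})$. The hypothesis that the support of $X_1$ is not contained in any coset $a + q\mathbb Z$ with $q > 1$ is equivalent to the strict inequality $|\varphi(\theta)| < 1$ for all $\theta \in (-\pi,\pi)\setminus\{0\}$: equality would, via the equality case of the triangle inequality, force the support into a proper subgroup coset.

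By compactness and continuity, there exists $\delta > 0$ such that $\sup_{\delta \leq |\theta| \leq \pi}|\varphi(\theta)| \leq \rho < 1$. I would then split the integral at $\pm\delta$. The outer piece is bounded in absolute value by $\rho^n = o(1/\sqrt{n})$, uniformly in $t$ because $|e^{-it\theta}| = 1$. For the inner piece, I Taylor-expand $\log\varphi(\theta) = i\mu\theta - \sigma^2\theta^2/2 + o(\theta^2)$ as $\theta \to 0$ and substitute $\theta = u/\sqrt{n}$, obtaining
\begin{equation*}
\frac{1}{2\pi\sqrt{n}}\int_{-\delta\sqrt{n}}^{\delta\sqrt{n}} e^{-i(t-n\mu)u/\sqrt{n}}\,\varphi(u/\sqrt{n})^n\,du.
\end{equation*}
The integrand converges pointwise to $e^{-i(t-n\mu)u/\sqrt{n}}\,e^{-\sigma^2 u^2/2}$, and a standard Gaussian Fourier integral recovers the density $\frac{1}{\sigma\sqrt{2\pi n}}\exp(-(t-n\mu)^2/(2n\sigma^2))$.

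The main technical hurdle is upgrading the pointwise Taylor approximation to an error $o(1/\sqrt{n})$ that is uniform in $t$. The standard device is to further split the inner integral into $|u| \leq \epsilon_n\sqrt{n}$, with $\epsilon_n \to 0$ slowly, and $\epsilon_n\sqrt{n} < |u| \leq \delta\sqrt{n}$. On the small-$u$ piece, the Taylor expansion yields an integrand dominated by a Gaussian majorant, so dominated convergence delivers the main asymptotic; on the larger-$u$ piece, for $|\theta|$ small one has $|\varphi(\theta)| \leq 1 - c\theta^2$ for some $c > 0$, giving $|\varphi(u/\sqrt{n})|^n \leq e^{-cu^2}$, whose tail integral outside $|u| > \epsilon_n\sqrt{n}$ tends to zero. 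Uniformity in $t$ is automatic throughout, since $t$ enters only through the unit-modulus phase $e^{-it\theta}$, so all bounds produced above are $t$-free.
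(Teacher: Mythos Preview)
The paper does not prove this lemma; it is quoted as a classical result with a reference to Gnedenko--Kolmogorov \cite{Gne-Kol}. So there is no ``paper's own proof'' to compare against. Your sketch is the standard Fourier-analytic argument found in that reference (and in most textbooks), and the overall strategy is sound: inversion formula, exponential decay of $|\varphi(\theta)|^n$ away from the origin guaranteed by the aperiodicity hypothesis, and a Gaussian comparison near the origin via the second-order Taylor expansion of $\log\varphi$.

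Two small points worth tightening if you were to write this out in full. First, the sentence ``the integrand converges pointwise to $e^{-i(t-n\mu)u/\sqrt{n}}\,e^{-\sigma^2 u^2/2}$'' is slightly off as written, since the purported limit still carries $n$; what one actually does is bound the difference
\[
\Bigl|\varphi(u/\sqrt{n})^n e^{-i n\mu u/\sqrt{n}} - e^{-\sigma^2 u^2/2}\Bigr|
\]
and integrate that bound against $du$, then identify the Gaussian Fourier integral $\int_{\mathbb R} e^{-isu}e^{-\sigma^2 u^2/2}\,du$ explicitly. Second, the lemma as stated only assumes finite variance, not a third moment, so the Taylor remainder in $\log\varphi(\theta) = i\mu\theta - \tfrac{1}{2}\sigma^2\theta^2 + o(\theta^2)$ is genuinely only $o(\theta^2)$, which is why you need the auxiliary scale $\epsilon_n\to 0$ rather than a fixed $\delta$; you have this right, but it is the place where a reader might expect more detail. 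Uniformity in $t$ is, as you say, automatic from $|e^{-it\theta}|=1$.
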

\subsection{Multinomial allocations}
First, consider the case of multinomial allocations, all boxes being equally likely. 
\begin{proof}[Proof of theorem \ref{multinomial}]
Let $X_1,...,X_n$ be i.i.d. with {\tiny $X_1\sim Poi(\lambda)$} $X_1 \sim \mathrm{Poi}(\lambda)$. By means of \eqref{Bayestrick},
\begin{align*}
\mathbb P(Y_{(n)}\in A)&=\mathbb P(X_{(n)}\in A)\frac{\mathbb P(\sum X_i=k, X_{(n)}\in A)}{\mathbb P(\sum X_i=k)}.
\end{align*}
Notice that
\begin{align*}
\mathbb P(X_{(n)}=m_n)\sim p_n, \quad \mathbb P(X_{(n)}=m_{n+1})\sim 1-p_{n}
\end{align*}
owing to Theorem \ref{generalversion}. Moreover, $\sum_{i=1}^n X_i\sim \text{Poi}(k)$, so that 
\begin{equation}
\mathbb P\Big(\sum_{i=1}^n X_i=k\Big)=e^{-k}\frac{k^k}{k!}\sim \frac{1}{\sqrt{2\pi k}}.
\end{equation}
It remains to estimate the \y{``tilded''} version of the $X_i$'s. If $A=\tilde m_n$, where $\tilde m_n=m_n$ or $\tilde m_n=m_n+1$ then $\{\tilde X_i\}_{i=1}^{n-1}=\{X_i\}_{i=1}^n\setminus X_{(n)}$ are still independent \y{and} identically distributed according to 
\begin{align*}
\mathbb P(\tilde X_1=t)=\frac{e^{-\lambda }\frac{\lambda^t}{t!}}{F(\tilde m_n)}, \quad t=0,.., \tilde m_n,
\end{align*}
$F$ being the cumulative distribution of Poisson as in section $3$.
By symmetry, each $X_i$ is equally likely to be the maximum, so that $\mathbb P(X_{(n)}=X_i)=\frac{1}{n}$. Therefore, 
\begin{align*}
\mathbb P\Big(\sum_{i=1}^n X_i=k|X_{(n)}=\tilde m_n \Big)=\mathbb P\Big(\sum_{i=1}^{n-1}\tilde X_i=n-\tilde m_n\Big),
\end{align*}
which can be now estimate\y{d} by means of Theorem \ref{LCLT} (notice that the condition that $X_1$ does not belong to a subprogression is obviously satisfied). The first moment is 
\begin{align*}
\mathbb E(\tilde X_1)=\frac{1}{F(\tilde m_n)}\sum_{j=0}^{\tilde m_n}e^{-\lambda}\frac{j\lambda^j}{j!}=\lambda \frac{F(\tilde m_n-1)}{F(\tilde m_n)}=\lambda(1+o(1)).
\end{align*}
Similarly, the variance is given by 
\begin{align*}
\mathbb Var (\tilde X_1^2)&=\frac{\sum_{j=0}^{m_n} e^{-\lambda}\frac{j^2\lambda^j}{j!}}{F(\tilde m_n)}-\lambda^2(1+o(1))\\&=\frac{\lambda^2 F(\tilde m_n-2)+\lambda F(\tilde m_n-1)}{F(\tilde m_n)}-\lambda^2(1+o(1))\\&=\lambda(1+o(1)).
\end{align*}
Hence, the local central limit theorem leads to 
\begin{align*}
\mathbb P\Big(\sum_{i=1}^{n-1}\tilde X_i= n-\tilde m_n\Big)&=\frac{1}{\sqrt{2\pi(n-1)\lambda}}\exp\Big\{\frac{(k-\tilde m_n-(n-1)\lambda(1+o(1))}{2(n-1)\lambda }\Big\}\\&\sim \frac{1}{\sqrt{2\pi k}}\exp\Big\{\frac{(\lambda-\tilde m_n)^2}{2k}  \Big\}\\&\sim \frac{1}{\sqrt{2\pi k}},
\end{align*}
where the last step follows from $\tilde m_n^2=o(k)$, a consequence of $\tilde m_n\sim \frac{\ln n}{\ln \ln n}$ and $k=\lambda n$. Therefore, 
\begin{align*}
\mathbb P(Y_{(n)}=m_n)\sim \mathbb P(X_{(n)}=m_n)\sim p_n, \quad \mathbb P(Y_{(n)}=m_n+1)\sim 1-p_n,
\end{align*}
as desired.
\end{proof} 
For the proofs of Theorem \ref{multinomialties}, the very same argument can be applied. Indeed, the only difference is that the number of copies of $\tilde X$'s is now $n-t,n-t_n(c)$ respectively. However, this does not affect the central limit theorem, since $t$ and $t_n(c)$ are much smaller than $n$ (so that the asymptotic in the central limit theorem remains the same). 

\subsection{A Bayesian version}
Consider now the Bayesian variant of the multinomial allocation problem. The idea is again the same, but a proof is sketched for the sake of completeness.
\begin{proof}[Proof of Theorem \ref{mixturedirichletoscillations}]
Fix $x\in \mathbb Z$. By means of \eqref{Bayestrick} and the conditional representation of Dirichlet mixture of multinomials as negative binomials, it suffices to show that for $X_1,...,X_n$ i.i.d. with $X_i \sim \mathrm{NB}(r,p)$ and $\frac{rp}{1-p}=\frac{k}{n}$, 
\begin{align*}
\mathbb P\Bigl (\sum_{i=1}^n X_i=k\Bigr )\sim \mathbb P\Bigl (\sum_{i=1}^n X_i=k| X_{(n)}\leq m_n+x\Bigr),
\end{align*}
as $n, k\rightarrow +\infty$. As before, the right hand side can be rewritten as 
\begin{align*}
\mathbb P\Bigl (\sum_{i=1}^n \tilde X_i=k\Bigr),
\end{align*}
where $\tilde X_i$ is the tilded version of $X_i$\y{,} given by
\begin{align*}
\mathbb P(\tilde X_i=s)=\frac{\mathbb P(X_1=s)}{\mathbb P(X_1\leq m_n+x)},\quad s\leq m_n+x. 
\end{align*}
Since both the mean and the variance are asymptotically the same for {\tiny $X_1$} $X_i$ and $\tilde X_i$ (using that $m_n+x\rightarrow +\infty$), the local central limit theorem can be applied to conclude the proof. 
\end{proof}

\section{Numerical results and applications}\label{numerics}
While theoretically satisfactory, the question remains of whether these asymptotic results are of any use in simulations or real models (or whether $n$ has to be unreasonably large for the effect to be manifest). Here are the main take-aways from some simulations for i.i.d. discrete random variables and random allocation models:\\ 
\begin{itemize}[noitemsep, topsep=0pt]
\item The merging of dependent and independent cases works well for reasonable values of $k$ and $n$. If the theory gives good approximations in some regime for the independent random variables, it also works for the dependent ones.
\centerline{}
\item In order to detect the oscillations of the maxima (as well as the other features) in the Poisson case, the quantity $\frac{\lambda}{x_n+1}$ has to be small. Since $x_n$ grows sublogarithmically, $n$ has to be extremely large compared to $\lambda$ (in particular, it is necessary to have $n\gg e^{\lambda}$). This explains why simulations essentially fail for $\lambda \gg 1$, why they work for $\lambda=O(1)$ provided $n$ is large (for $\lambda=1$, in order to obtain $p_n$ within an error of $\epsilon$, it is necessary to have at least $n\geq e^{\frac{1}{\epsilon}}$), and why they are excellent for $\lambda\ll 1$, even with relatively small $n$.
\end{itemize}
\subsection{The role of the mean}
Here are some numerical values for $m_n, x_n, p_n$, depending on $n$. For now, we take $\lambda=1$, (but, as explained above, soon $\lambda$ will be small).\\ 
\begin{center}
     \begin{tabular}{ | l | l | l | p{2cm} |}    \hline
    $n$ & $x_n$ & $m_n$ & $p_n$ \\ \hline
    $10^3$ & 4.63591 & 5 & 0.58694674 \\ \hline
    $10^4$ & 5.84299 & 6 & 0\y{.}47741767 \\ \hline
    $10^5$ & 6.95712 & 7 & 0\y{.}40055502 \\ \hline
    $10^6$ & 8.00608 & 8 & 0\y{.}36296353 \\ \hline
    $10^9$ & 10.89530 & 11 & 0\y{.}46225972 \\ \hline
    $10^9+10^7$ & 10.8993 & 11 & 0.45873497 \\ \hline
    $10^{50}$ & 40\y{.}0255 & 40 & 0\y{.}333090 \\
    \hline
    \end{tabular}
    \medskip
    
  Values of $m_n, x_n, p_n$ from Theorem \ref{generalversion} as a function of $n$ (with $\lambda=1$). The value $x_n$ can be obtained e.g., via approximating the Lambert function as in \cite{Knuth}, or by means of numerical method\y{s}.
  \end{center}
Here are some observations from the table:
\begin{itemize}
\item The value $x_n$ grows slowly. At first sight, it seems logarithmic, as the factor $\ln \ln n$ in the asymptotic \eqref{asymptoticuseless} is hard to detect for reasonable value\y{s} of $n$. Only the last entry gives an insight in this direction,
\item The absence of a law of large numbers, as well as the oscillations, already emerges in this picture: the value of $p_n$ does not exhibit any limiting behavior,
\item The period of the oscillations (i.e., the difference $N_{i+1}-N_i$ in the language of Theorem \ref{generalversion}) is increasing in $n$.

\end{itemize}
 In the following, $1000$ trials of the \y{experiment} \y{``}drop $\lambda n$ balls into $n$ boxes independently" were simulated. Because of Theorem \ref{multinomial}, the maximum box count should be $m_n$ or $m_n+1$ with probabilities given by, respectively, $p_n$ or $1-p_n$. The outcomes are represented in the following table:
\centerline{}
\centerline{}
\begin{center}
    \begin{tabular}{ | l | l | l | p{2cm}| p{2cm} | p{2cm}|}   \hline
    $n$ & $\lambda$ & $m_n$ &$p_n$ & $f_n$ & $o_n$ \\ \hline
    $10^5$ & 0.1 & 3 & 0.675268 & 0.69 & 0.005 \\ \hline
    $10^5$ & 1  & 7 & 0.40055502 &  0.353 & 0.11  \\ \hline
    $10^5$ & 10 & 25 & 0.325168 & 0.162 & 0.467 \\ 
   
    \hline
        \end{tabular}
    \medskip

Comparison between $p_n$ and the relative frequency $f_n$ for $m_n$ out of $1000$ trials of the experiment \y{``}drop $\lambda n$ balls into $n$ boxes". The last column represents the fraction $o_n$ of maxima outside the cluster values $m_n, m_n+1$.
    \end{center}

 \centerline{}
Here are some observations:
\begin{itemize}
\item For large $\lambda$, the approximation is useless. This is not surprising since the quantity $\frac{\lambda}{x_{n}+1}$ is far from being negligible. 
\item For small $\lambda$, the approximation works well\y{,} and the theory can be fully appreciated for reasonable $n$, since the quantity $\frac{\lambda}{x_n+1}$ is small,
\item If $\lambda$ increases, the value of $m_n$ also increases. However, the $\lambda$-correction in $x_n$ (and hence, in $m_n$) is rather small. This is the reason why small $\lambda$ is preferable in order to see the results from the theory,
\item Notice that since $x_n$ grows sublogarithmically, for fixed $\lambda$ we need to significantly increase $n$ to see an improvement. On the other hand, once $\lambda$ is small, the theory works even for $n$ small (e.g., $n=1000$).
\end{itemize}
That being said, the focus will be now on the regime $\lambda=0.01$ in order to even better capture the \y{``}oscillating behavior" of $p_n$. This is an experiment of dropping $\lambda n$ balls into \y{$n$} boxes, for various values of $n$,
\begin{center}
    \begin{tabular}{ | l | l  | p{2cm}| p{2cm} | p{2cm}|}   \hline
    $n$ & $m_n$ &$p_n$ & $f_n$ \\ \hline
    $2000$ & 1  & 0.8902 & 0.9073 \\ \hline
    $4000$ & 1 & 0.8039 & 0.8171 \\ \hline
    $8000$ & 1 & 0.6602 & 0.6646 \\ \hline
    $16000$ & 1 & 0.4492 & 0.4548 \\ \hline
    $32000$ & 1 & 0.2106 & 0.2047 \\ \hline
    $64000$ & 1 & 0.0469 & 0.0357 \\ \hline
    $128000$ & 1 & 0.0023 & 0.0017 \\ \hline
    $256000$ & 1 & 0.0000 & 0,0000\\ \hline
    $512000$ & 2 & 0.9103 & 0.9181 \\
    \hline
        \end{tabular}
    \medskip

Oscillation of $p_n$ for $n=1000\times 2^m$, $m\in \{1,...,9\}$, and $\lambda=0.01$. For each $n$, I simulated $10000$ times the experiment of dropping $\lambda n$ balls into $n$ boxes. As before, $f_n$ denotes the relative frequency of $m_n$.   \end{center}
\centerline{}

The oscillation is visible in the last step: $p_n$ \y{``}refreshes" at $1$ after $x_n-m_n$ changes its sign, a phenomenon that happens on a long scale. \\ \\
Moving to the number of ties, Theorem \ref{numties} implies that the probability of having $t$ ties at the value of the maximal order statistic is given by
\begin{align*}
\mathbb P(\text{$t$ ties for the maximal order statistic})\sim\, p\frac{\ln^{t+1}\Big(\frac{1}{p}\Big)^{t+1}}{(t+1)!}
\end{align*}
Here is a simulation of the process:
\begin{center}
    \begin{tabular}{ | p{2cm}| p{2cm} | p{2cm}|}   \hline
    $t$ & $t_n$ &$f_n$ \\ \hline
    $0$ & 0\y{.}35948  & 0.3613 \\ \hline
    $1$ & 0\y{.}14382 & 0.1431 \\ \hline
    $2$ & 0\y{.}03836 & 0.0385  \\ \hline
    $3$ & 0\y{.}0076 & 0.008  \\ 
    \hline
        \end{tabular}
    \medskip

The result of $10000$ simulation of dropping $160$ balls into $16000$ boxes ($\lambda=0.01$, $p_n=0\y{.}44924115$) and counting the number of ties $t$. The relative frequencies $f_n$ are compared to the theoretical probabilities $t_n$. \end{center}
\centerline{}
The results \y{are very} accurate for small number\y{s} of ties. Finally, here are simulations for the result for the cluster size on the top two spots for the same value\y{s} of $n$ and $\lambda$: the theoretical result is that about $156.65$ boxes should have a count of $1$ or $2$ balls. The average of $10000$ experiments gives the result $159.21$.\\ \\

\subsection{Coincidence for earthquakes}
In the popular imagination, big earthquakes are one of the main instances of randomness in natural events. Heuristically, big earthquakes are not independent of each other (as everyone who lives in a seismic area knows), and they \y{instead} tend to clump together. As such, a reasonable model is that of inter-arrival times (forgetting about any geographic information) which are distributed according a negative binomial (see \cite{Earthquakes}), which is suitable for representing positively correlated events. \\ \\
In the following, we adopt this model and use our theory to study the occurrence of multiple big earthquakes in a given window of time, using data from \cite{USGS}. For instance, can we explain the occurrence of multiple earthquakes in a given hour by purely statistical argument\y{s}, without any \y{``}cause-effect" argument\y{s}?\\ \\
In the language of the previous section, $X_i$ is the number of earthquakes of magnitude above $6$ which occurred in an hour $i$ of the day, with $i$ running from $1$ to $24$. We consider realizations over three periods of time: the $70s$, the $80s$, and the $90s$, which correspond respectively to $3652, 3653,$ and $3652$ instances of the $X_i$'s. 
\begin{center}
    \begin{tabular}{ | l | l | p{3cm}| p{2cm}| p{2cm}| p{2cm}| p{2cm}| p{2cm}|}   \hline
    Decade & $E(X_1)$ &$Var(X_1)$ & $r$ & $p$  \\ \hline
    $70's$ & $0.01226497$  & $6.089 \times 10^{-4}$ &   $0.0496$ &   $0.0472$\\ \hline
    $80's$ & 0.01382425 & $7.112 \times 10^{-4}$  &   $0.0514$   &   $0.0489$\\ \hline
    $90's$ & 0.01669177 & $8.6302 \times 10^{-4}$  & $0.0517$ &   $ 0.0489$ \\ 
    \hline
        \end{tabular}
    \medskip

Occurrence of earthquakes in a given hour across three different decades, with corresponding estimators with a negative binomial model. \end{center}
\centerline{}

We compare the results between our theory, a result of $10^6$ simulations of negative binomial random variables with the same parameter, and the empirical data. We expect the maximum number of earthquakes in a single hour within a day to be either $0$, $1$, or $2$ (theoretically, numerically, and empirically it is almost impossible to observe more than $3$ earthquakes in a given hour). Here are the results:

\begin{center}
    \begin{tabular} { | l | l  | p{4cm}| p{3.5cm} | p{4cm}|} \hline
    Decade & theory & numerics & empirical data \\ \hline
    $70's$ & 75.17  -  23.49  -  1.28 & 75.06  -  24.08  -  0.81 &   76.01  -  23.11  -  0.84 \\ \hline
    $80's$ & 72.52  -  25-92  -  1.48 & 72.34  -  26.65  -  0.97  &  74.24  -  24.89  -  0.95 \\ \hline
    $90's$ & 67.88  -  30.24  -  1.79 & 67.65  -  31.07  -  1.22 & 69.52  -  29.30  -  1.01 \\ 
    \hline
        \end{tabular}
    \medskip

Maximum number of earthquakes in a single hour within a day. The notation $a$ - $b$ - $c$ denotes the percentages of days with $0$, $1$, or $2$ as a maximum. \end{center}
\centerline{}
\section*{Acknowledgement}
The author wishes to thank Persi Diaconis for suggesting the problem, and for many helpful discussions on the subject. The author is also indebted to Daniel Dore and two referees for their careful revision of the first drafts.

\end{document}